\numberwithin{equation}{section}
\numberwithin{figure}{section}
\theoremstyle{plain}
\newtheorem{thm}{\protect\theoremname}[section]
\theoremstyle{plain}
\theoremstyle{definition}
\theoremstyle{plain}
\newtheorem{lem}[thm]{\protect\lemmaname}
\newtheorem{cor}[thm]{\protect\corollaryname}
\theoremstyle{plain}
\theoremstyle{plain}
\providecommand{\definitionname}{Definition}
\providecommand{\lemmaname}{Lemma}
\providecommand{\theoremname}{Theorem}
\providecommand{\corollaryname}{Corollary}
\providecommand{\remarkname}{Remark}
\providecommand{\propositionname}{Proposition}
\DeclareMathOperator{\cp}{cap}
\begin{document}

\title[Extension operators on Sobolev spaces with  decreasing integrability]
{Extension operators on Sobolev spaces with  decreasing integrability}

\author{A.~Ukhlov}
\begin{abstract}
We study extension operators on Sobolev spaces with decreasing integrability on the base of set functions associated with the operator norms. Sharp necessary conditions in the terms of the generalized density condition and the terms of weak equivalence of Euclidean and intrinsic metrics are given. 
\end{abstract}
\maketitle
\footnotetext{\textbf{Key words and phrases:} Sobolev spaces, Extension operators} \footnotetext{\textbf{2010
Mathematics Subject Classification:} 46E35}

\section{Introduction }

Let $\Omega$ be a domain in the Euclidean space $\mathbb R^n$, $n\geq 2$. Recall that the operator
$$
E: W^1_p(\Omega)\to W^1_q(\mathbb R^n),\,\,1\leq q\leq p\leq\infty,
$$
is called an extension operator on Sobolev spaces (with decreasing integrability in the case $q<p$), if $E(f)\big|_{\Omega}=f$ for any function $f\in W^1_p(\Omega)$ and 
$$
\|E\|=\sup\limits_{f\in W^1_p(\Omega)}\frac{\|E(f)\mid W^1_q(\mathbb R^n)\|}{\|f\mid W^1_p(\Omega) \|}<\infty.
$$

Sobolev extension operators arise in the analysis of PDE (see, for example, \cite{M,S70}) and play an important role in the Sobolev spaces theory. In the present article we prove the sharp Ahlfors type necessary generalized ($p,q$)-measure density condition for extension operators of seminormed Sobolev spaces: {\it Let there exists a continuous linear extension operator $E: L^1_p(\Omega)\to L^1_q(\mathbb R^n)$, $n<q\leq p<\infty$,  then
\begin{equation}
\label{density}
\Phi(B(x,r))^{p-q}|B(x,r)\cap\Omega|^{q}\geq c_0 |B(x,r)|^p, \,\,0<r< 1,
\end{equation}
where $\Phi$ is an additive set function associated with the extension operator norm and a constant $c_0=c_0(p,q,n)$ depends on $p$, $q$ and $n$ only.} In the case $p=q$ the measure density condition was introduced in \cite{HKT} (see, also \cite{V87}) and the study of the case $q<p$ requires set functions associated with the extension operators \cite{U99,VU04}. 

It is well known \cite{C61, S70} that if $\Omega\subset\mathbb R^n$ be a Lipschitz domain, then there exists the bounded extension operator $E: W^1_p(\Omega)\to W^1_p(\mathbb R^n)$, $1\leq p\leq \infty$. In \cite{J81} the notion of $(\varepsilon,\delta)$-domains was introduced and it was proved that in every $(\varepsilon,\delta)$-domain there exists the bounded extension operator $E: W^k_p(\Omega)\to W^k_p(\mathbb R^n)$, for all $k\geq 1$  and $p\geq 1$.

The complete description of extension operators of the homogeneous Sobolev space $L^1_2(\Omega)$, $\Omega\subset\mathbb R^2$, were obtained in \cite{VGL79} in the terms of quasi-hyperbolic (quasiconformal) geometry of domains. Namely, it was proved that a simply connected domain  $\Omega\subset\mathbb R^2$ is the $L^1_2$-extension domain iff $\Omega$ is an Ahlfors domain (quasi-disc). In the case of spaces $L^k_p(\Omega)$, $2<p<\infty$, defined in domains $\Omega\subset\mathbb R^2$, necessary and sufficient conditions were obtained in \cite{ShZ16} and the conditions were formulated in the terms of sub-hyperbolic metrics. Note, that extension operators on Sobolev spaces $W^k_p(\Omega)$ were intensively studied in the last decade, see, for example, \cite{FIL14,HKT,K98,KYZ10,Sh17}, but the problem of the complete characterization of Sobolev extension domains in the general case is open. 

If $p>n$ necessary conditions on $W^1_p$-extension domains in the terms of an intrinsic metric and a measure density were obtained in \cite{V87}. Extension operators of Sobolev spaces defined in domains of Carnot groups $E: W^1_p(\Omega)\to W^1_p(\mathbb G)$ were considered in \cite{G96} and extensions of Sobolev spaces on metric measure spaces in \cite{HKT2}.

Results of \cite{HKT,V87} state that there no exists the extension operator 
$$
E: W^1_p(\Omega)\to W^1_p(\mathbb R^n),\,\,1\leq p<\infty,
$$
in H\"older cusp domains $\Omega\subset\mathbb R^n$. In \cite{GS82}, using the method of reflections, were constructed extension operators with decreasing integrability from domains with the H\"older cusps. Later, the more general theory of composition operators on Sobolev spaces with decreasing integrability was founded in \cite{U93,VU02}. Using another technique extension operators in such type domains were considered in \cite{MP86,MP87}. In \cite{B76} was studied an extension operator from H\"older singular domains with decreasing smoothness. The detailed study of extension operators on Sobolev spaces defined in non-Lipschitz domains was given in \cite{MP07}.

Extension operators with decreasing integrability was considered in \cite{U99} where was first introduced a set function (measure) associated with extension operators and were obtained necessary conditions in integral terms. In the present article we give sharp  necessary condition of existence of extension operators on Sobolev spaces with  decreasing integrability in the capacitary terms and we prove the generalized ($p,q$)-measure density condition that refines results of \cite{U99} and generalized \cite{HKT} in the case $n<q<p<\infty$.

Necessary conditions in the terms of intrinsic metrics were considered also. On this base lower estimates of norms of extension operators were obtained. Norm estimates of extension operators have applications in the spectral theory of non-linear elliptic operators and give estimates of Neumann eigenvalues in the terms of operator's norms \cite{GPU20}.

\section{Set functions associated with extension operator }

Let $\Omega$ be a domain in the Euclidean space $\mathbb R^n$, $n\geq 2$, then the Sobolev space $W^1_p(\Omega)$, $1\leq p\leq\infty$, is defined 
as a Banach space of locally integrable weakly differentiable functions
$f:\Omega\to\mathbb{R}$ equipped with the following norm: 
\[
\|f\mid W^1_p(\Omega)\|=\| f\mid L_p(\Omega)\|+\|\nabla f\mid L_p(\Omega)\|,
\]
where $\nabla f$ is the weak gradient of the function $f$. The homogeneous seminormed Sobolev space $L^1_p(\Omega)$, $1\leq p\leq\infty$,  is considered with the seminorm: 
\[
\|f\mid L^1_p(\Omega)\|=\|\nabla f\mid L_p(\Omega)\|.
\]
We consider the Sobolev spaces as Banach spaces of equivalence classes of functions up to a set of $p$-capacity zero \cite{M}. 

\subsection{Set functions and capacity }

Let $A\subset\mathbb R^n$ be an open bounded set such that $A\cap\Omega\ne \emptyset$. Denote by $W_0(A;\Omega)$ the class of continuous  functions $f\in L^1_p(\Omega)$ such that $f\eta$ belongs to $L^1_p(A\cap\Omega)\cap C_0(A\cap\Omega)$ for all smooth functions $\eta\in C_0^{\infty}(\Omega)$. We define the set function
$$
\Phi(A)=\sup\limits_{f\in W_0(A;\Omega)}\left(\frac{\|E(f)\mid L^1_q(A)\|}{\|f\mid L^1_p(A\cap\Omega) \|}\right)^{\kappa},\,\,
\frac{1}{\kappa}=\frac{1}{q}-\frac{1}{p}>0.
$$
This set function was introduced in \cite{U99} in connection with the lower estimates of norms of extension operators of Sobolev spaces. For readers convenience we give the detailed proof of the following theorem announced in \cite{U99}: 

\begin{thm}
Let there exists a a continuous linear extension operator 
$$E: L^1_p(\Omega)\to L^1_q(\mathbb R^n),\,\, 1\leq q< p<\infty.$$ 
Then the function $\Phi(A)$ be a bounded monotone countably additive set function defined on open bounded subsets $A\subset\mathbb R^n$.
\end{thm}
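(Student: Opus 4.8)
The plan is to establish the three asserted properties of $\Phi$ — boundedness, monotonicity, and countable additivity — separately, reducing each to a statement about the operator norm of $E$ and the structure of the competing functions in $W_0(A;\Omega)$.

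First I would prove \emph{boundedness and monotonicity}, which are the routine parts. For any $f\in W_0(A;\Omega)$, the defining inequality $\|E(f)\mid L^1_q(A)\|\le \|E(f)\mid L^1_q(\mathbb R^n)\|\le \|E\|\cdot\|f\mid L^1_p(\Omega)\|$ together with the fact that $f$ is supported essentially in $A\cap\Omega$ (so that $\|f\mid L^1_p(\Omega)\|=\|f\mid L^1_p(A\cap\Omega)\|$) yields $\Phi(A)\le \|E\|^{\kappa}<\infty$. Monotonicity, namely $\Phi(A_1)\le\Phi(A_2)$ for $A_1\subset A_2$, follows because $W_0(A_1;\Omega)\subset W_0(A_2;\Omega)$ (a function controlled on the smaller set extends by the same cutoff argument to a competitor on the larger set) and because $\|E(f)\mid L^1_q(A_1)\|\le\|E(f)\mid L^1_q(A_2)\|$ while the denominator over $A_1\cap\Omega$ only decreases the admissible class, so the supremum over the larger class dominates.

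The substantive content is \emph{countable additivity}. I would proceed in two halves. For \emph{superadditivity}, given pairwise disjoint open sets $A_i$ with union $A$, I would take near-extremal functions $f_i\in W_0(A_i;\Omega)$ for each $i$ and form the sum $f=\sum_i f_i$; since the $A_i$ are disjoint the supports are essentially disjoint, so both $\|E(f)\mid L^1_q(A)\|^q=\sum_i\|E(f_i)\mid L^1_q(A_i)\|^q$ and the corresponding gradient integral for the denominator decompose additively. The key estimate is then to use the elementary inequality relating $\ell^{q/p}$ and $\ell^1$ sums (with exponents $q<p$) to show $\Phi(A)\ge\sum_i\Phi(A_i)$ after passing to the supremum; this is exactly where the ratio exponent $\kappa=(1/q-1/p)^{-1}$ and the condition $q<p$ enter, and I expect this to be the technical crux. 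For \emph{subadditivity}, I would start from an arbitrary near-extremal $f\in W_0(A;\Omega)$ and decompose its energy across the pieces $A_i$ using a partition of unity or simply restriction, estimating the single global ratio from above by the discrete sum of the local ratios; here again the reverse direction of the same $\ell^{q/p}$ versus $\ell^1$ comparison (Hölder's inequality in the form $\sum a_i\le(\sum a_i^{p/q})^{q/p}(\#)^{\ldots}$, applied carefully) is what converts the additivity of the integrals into additivity of the $\Phi$-values.

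The main obstacle, and the step I would spend the most care on, is the countable (as opposed to merely finite) additivity together with the interchange of the supremum and the summation. Finite additivity follows from the two inequalities above, but to pass to countably many sets I would need a truncation/exhaustion argument showing that the tail $\sum_{i>N}\Phi(A_i)$ is controlled — this uses boundedness of $\Phi$ on the whole of $A$ together with the absolute continuity of the integrals defining the seminorms. Concretely, I would fix $\varepsilon>0$, choose competitors achieving $\Phi(A_i)-\varepsilon 2^{-i}$, and verify that the glued function still lies in $W_0(A;\Omega)$ with the required continuity and cutoff properties, so that the monotone limit of the finite sums equals $\Phi(A)$. The delicate point throughout is ensuring that membership in the admissible class $W_0$ is preserved under summation and restriction, i.e.\ that the continuity and compact-support-in-$A\cap\Omega$ conditions survive the constructions.
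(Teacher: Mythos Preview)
Your plan matches the paper's proof: monotonicity via inclusion of the test classes, superadditivity by gluing near-extremal competitors $f_k\in W_0(A_k;\Omega)$ on the disjoint pieces, and subadditivity dismissed as ``proved directly.'' The one concrete device the paper adds, which you leave implicit in your ``$\ell^{q/p}$ vs.\ $\ell^1$'' step, is to \emph{normalize} the competitors by $\|f_k\mid L^1_p(A_k\cap\Omega)\|^p=\Phi(A_k)(1-\varepsilon/2^k)$; this is precisely the equality case of the H\"older comparison you allude to, so the glued function $g_N=\sum_{k\le N}f_k$ yields the ratio $\bigl(\sum_{k\le N}\Phi(A_k)(1-\varepsilon/2^k)\bigr)^{1/\kappa}$ with no further inequality needed, and the $\varepsilon/2^k$ weights already absorb the finite-to-countable passage you worry about.
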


\begin{proof}
Let $A_1\subset A_2$ be open subsets of $\mathbb R^n$. Then extending functions of $C_0(A_1)$ by zero we have that $C_0(A_1)\subset C_0(A_2)$ and we obtain
\begin{multline*}
\Phi(A_1)=\sup\limits_{f\in W_0(A_1;\Omega)}\left(\frac{\|E(f)\mid L^1_q(A_1)\|}{\|f\mid L^1_p(A_1\cap\Omega) \|}\right)^{\kappa}
\leq \sup\limits_{f\in W_0(A_1;\Omega)}\left(\frac{\|E(f)\mid L^1_q(A_2)\|}{\|f\mid L^1_p(A_2\cap\Omega) \|}\right)^{\kappa}\\
\leq
\sup\limits_{f\in W_0(A_2;\Omega)}\left(\frac{\|E(f)\mid L^1_q(A_2)\|}{\|f\mid L^1_p(A_2\cap\Omega) \|}\right)^{\kappa}
=\Phi(A_2).
\end{multline*}
Hence $\Phi$ is the monotone set function. 

Consider open bounded disjoint sets $A_k$, $k=1,2,...$ such that $A_0=\bigcup_{k=1}^{\infty} A_k$. We choose arbitrary functions 
$f_k\in W_0(A_k;\Omega)$ such that
\begin{multline*}
\|E(f_k)\mid L^1_q(A_k)\|\geq \left(\Phi(A_k)\left(1-\frac{\varepsilon}{2^k}\right)\right)^{\frac{1}{\kappa}}\|f_k\mid L^1_p(A_k\cap\Omega)\|,\\
\|f_k\mid L^1_p(A_k\cap\Omega)\|^p=\Phi(A_k)\left(1-\frac{\varepsilon}{2^k}\right),
\end{multline*}
where $k=1,2,...$ and $\varepsilon\in (0,1)$ is a fixed number. Setting $g_N= \sum\limits_{k=1}^N f_k$ we find
\begin{multline*}
\left\|E(g_N)\bigg\vert L^1_q\left(\bigcup_{k=1}^N A_k\right)\right\|\geq 
\left(\sum\limits_{k=1}^N\left(\Phi(A_k)\left(1-\frac{\varepsilon}{2^k}\right)\right)^{\frac{q}{\kappa}}\|g_N\mid L^1_p(A_k\cap\Omega)\|^q\right)^{\frac{1}{q}}\\
=
\left(\sum\limits_{k=1}^N\Phi(A_k)\left(1-\frac{\varepsilon}{2^k}\right)\right)^{\frac{1}{\kappa}}\left\|g_N\bigg\vert L^1_p\left(\left(\bigcup_{k=1}^N A_k\right)\cap\Omega\right)\right\|\\
\geq
\left(\sum\limits_{k=1}^N\Phi(A_k)-\varepsilon\Phi(A_0)\right)^{\frac{1}{\kappa}}\left\|g_N\bigg\vert L^1_p\left(\left(\bigcup_{k=1}^N A_k\right)\cap\Omega\right)\right\|,
\end{multline*}
since the sets where $\nabla E(f_k)$ do not vanish are disjoint. By the last inequality,
we have
$$
\Phi(A_0)^{\frac{1}{\kappa}}\geq \sup\frac{\left\|E(g_N)\bigg\vert L^1_q\left(\bigcup_{k=1}^N A_k\right)\right\|}{\left\|g_N\bigg\vert L^1_p\left(\left(\bigcup_{k=1}^N A_k\right)\cap\Omega\right)\right\|}\geq
\left(\sum\limits_{k=1}^N\Phi(A_k)-\varepsilon\Phi(A_0)\right)^{\frac{1}{\kappa}},
$$
where the upper bound is taken over all above functions $g_N\in W_{0}\left(\left(\bigcup_{k=1}^N A_k\right);\Omega\right)$.
Since both $N$ and $\varepsilon$ are arbitrary, we have
$$
\sum\limits_{k=1}^{\infty}\Phi(A_k)\leq \Phi\left(\bigcup_{k=1}^{\infty}A_k\right).
$$
The inverse inequality can be proved directly.
\end{proof}

\begin{cor}
Let there exists a continuous linear extension operator 
$$
E: L^1_p(\Omega)\to L^1_q(\mathbb R^n),\,\, 1\leq q< p<\infty.
$$
Then
\begin{equation}
\label{ineq}
\|E(f)\mid L^1_q(A)\|\leq \Phi(A)^{\frac{1}{\kappa}}\|f\mid L^1_p(A\cap\Omega)\|,\,\,
\frac{1}{\kappa}=\frac{1}{q}-\frac{1}{p},
\end{equation}
for any function $f\in W^1_{\infty}(A)\cap C_0(A)$.
\end{cor}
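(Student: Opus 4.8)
The plan is to read off inequality \eqref{ineq} directly from the definition of the set function $\Phi(A)$ as a supremum over the class $W_0(A;\Omega)$, once I have checked that every $f\in W^1_\infty(A)\cap C_0(A)$ produces, by restriction to $\Omega$, an admissible competitor in that supremum. Throughout, $E(f)$ is understood as $E(f|_{\Omega})$, and I note that $\|f\mid L^1_p(A\cap\Omega)\|=\|f|_{\Omega}\mid L^1_p(A\cap\Omega)\|$, so nothing is lost in passing to the restriction.

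First I would fix $f\in W^1_\infty(A)\cap C_0(A)$ and extend it by zero outside $A$. Since $f$ is Lipschitz with compact support $K:=\operatorname{supp}f\Subset A$, the zero extension is Lipschitz on $\mathbb R^n$, so $f|_{\Omega}$ is continuous and its gradient is bounded with compact support, whence $f|_{\Omega}\in L^1_p(\Omega)$. To verify $f|_{\Omega}\in W_0(A;\Omega)$ I would take an arbitrary $\eta\in C_0^{\infty}(\Omega)$ and estimate the support of the product: $\operatorname{supp}(f\eta)\subset K\cap\operatorname{supp}\eta$ is compact, it stays at positive distance from $\partial A$ because $K\Subset A$, and at positive distance from $\partial\Omega$ because $\operatorname{supp}\eta\Subset\Omega$; hence it is a compact subset of $A\cap\Omega$. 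Since $f\eta$ is moreover Lipschitz, $f\eta\in L^1_p(A\cap\Omega)\cap C_0(A\cap\Omega)$, which is precisely the defining requirement for $W_0(A;\Omega)$.

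With this membership in hand, and provided $\|f\mid L^1_p(A\cap\Omega)\|>0$, the definition of $\Phi(A)$ gives
$$
\left(\frac{\|E(f)\mid L^1_q(A)\|}{\|f\mid L^1_p(A\cap\Omega)\|}\right)^{\kappa}\le\Phi(A),
$$
and raising to the power $1/\kappa$ (admissible since $\kappa>0$) yields \eqref{ineq}. The only step needing separate care — and the main, if modest, obstacle — is the degenerate case $\|f\mid L^1_p(A\cap\Omega)\|=0$, i.e.\ $\nabla f=0$ a.e.\ on $A\cap\Omega$. Here $f$ is constant on each connected component $U$ of $A\cap\Omega$; since every boundary point of such a $U$ lying in $\Omega$ must belong to $\partial A$, where $f$ vanishes, continuity forces $f\equiv 0$ on $U$ unless $U=\Omega$ (the case $\Omega\subset A$, in which $f$ is a single constant on $\Omega$). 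In either situation $\|f|_{\Omega}\mid L^1_p(\Omega)\|=0$, so the boundedness of the linear operator $E$ on the seminormed spaces forces $\|E(f)\mid L^1_q(\mathbb R^n)\|=0$, hence $\|E(f)\mid L^1_q(A)\|=0$ and \eqref{ineq} holds trivially as $0\le 0$.
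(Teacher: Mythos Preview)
Your proposal is correct and matches the paper's approach: the corollary is stated there without proof, being an immediate consequence of the definition of $\Phi(A)$ once one checks that any $f\in W^1_\infty(A)\cap C_0(A)$ restricts to an element of $W_0(A;\Omega)$, which is exactly the verification you carry out. Your treatment of the degenerate case $\|f\mid L^1_p(A\cap\Omega)\|=0$ is more explicit than anything in the paper, but it is sound and does not deviate from the intended argument.
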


Recall the notion of a variational $p$-capacity \cite{GResh}. The condenser in the domain $\Omega\subset \mathbb R^n$ is the pair $(F_0,F_1)$ of connected closed relatively to $\Omega$ sets $F_0,F_1\subset \Omega$. A continuous function $f\in L_p^1(\Omega)$ is called an admissible function for the condenser $(F_0,F_1)$,
if the set $F_i\cap \Omega$ is contained in some connected component of the set $\operatorname{Int}\{x\in\Omega : f(x)=i\}$,\ $i=0,1$. We call as the $p$-capacity of the condenser $(F_0,F_1)$ relatively to domain $\Omega$
the following quantity:
\begin{equation}
\label{cap}
{{\cp}}_p(F_0,F_1;\Omega)=\inf\|f\vert L_p^1(\Omega)\|^p.
\end{equation}
Here the greatest lower bond is taken over all functions admissible for the condenser $(F_0,F_1)\subset\Omega$. If the condenser has no admissible functions we put the capacity equals to infinity.

Let $F_1=E$ subset of open set $U\subset\Omega$ and $F_0=\Omega\setminus U$, then the condenser $R=(E,U)=(\Omega\setminus U,E)$ is called a ring condenser or ring. Note, that the infimum in (\ref{cap}) can be taken on over functions $f\in C^{\infty}_0(\Omega)$ such that $f=1$ on $E$ and $f=0$ on $\Omega\setminus U$.

\begin{thm}
Let there exists a a continuous linear extension operator 
$$
E: L^1_p(\Omega)\to L^1_q(\mathbb R^n),\,\, 1\leq q< p<\infty.
$$
Then for any compact set
$E\subset (U\cap\Omega)$ the inequality
\begin{equation}
\label{ineqcp}
\cp^{\frac{1}{q}}_q(E,U)\leq \Phi(U)^{\frac{1}{\kappa}}\cp^{\frac{1}{p}}_p(E,(U\cap\Omega)),\,\,
\frac{1}{\kappa}=\frac{1}{q}-\frac{1}{p},
\end{equation}
holds for any open set $U\subset\mathbb R^n$.
\end{thm}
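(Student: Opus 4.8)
The plan is to bound the $q$-capacity from above by producing a single admissible function for the ring condenser $(E,U)$ in $\mathbb{R}^n$, obtained by extending a near-optimal admissible function for the $p$-condenser in $\Omega$, and then to feed it into the Corollary's pointwise-on-$U$ estimate \eqref{ineq}. Since the factor $\Phi(U)^{1/\kappa}$ with $1/\kappa=1/q-1/p$ is already built into \eqref{ineq}, no separate exponent manipulation beyond a direct chaining of inequalities should be needed.

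First I would fix the compact set $E\subset U\cap\Omega$ and, for $\varepsilon>0$, choose a continuous admissible function $f$ for the condenser defining $\cp_p(E,U\cap\Omega)$ with $\operatorname{supp}f\subset U\cap\Omega$, $0\le f\le 1$, $f\equiv 1$ on $E$, and $\|f\mid L^1_p(\Omega)\|^p\le\cp_p(E,U\cap\Omega)+\varepsilon$. By the remark following \eqref{cap} such $f$ may be taken smooth, and being bounded with compact support in $U\cap\Omega$ it lies in the class $W^1_\infty(U)\cap C_0(U)$ on which \eqref{ineq} holds with $A=U$. Setting $u=E(f)$ and using $u\big|_{\Omega}=f$, I get $u\equiv 1$ on $E$ and $u\equiv 0$ on $\Omega\setminus U$, while \eqref{ineq} gives $\|u\mid L^1_q(U)\|\le\Phi(U)^{1/\kappa}\|f\mid L^1_p(U\cap\Omega)\|$.

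The remaining step is to verify that a suitable truncation of $u$ is admissible for the ring condenser $(E,U)$ relative to $\mathbb{R}^n$. Replacing $u$ by $\min\{1,\max\{0,u\}\}$ preserves the value $1$ on $E$, does not increase $\int_U|\nabla u|^q$, and keeps $u\equiv 0$ on $\Omega\setminus U$. Granting admissibility, the definition \eqref{cap} yields $\cp_q(E,U)\le\int_U|\nabla u|^q=\|u\mid L^1_q(U)\|^q$; combining this with the previous display and letting $\varepsilon\to 0$ gives exactly \eqref{ineqcp}. Finiteness of the right-hand side is guaranteed by the boundedness of $\Phi$ established earlier.

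The main obstacle is precisely this admissibility: the definition requires $u$ to vanish on all of $\mathbb{R}^n\setminus U$, whereas $E(f)$ is controlled only inside $U$ and is a priori nonzero on the part of $\mathbb{R}^n$ lying outside both $\Omega$ and $U$. Inside $\Omega$ there is no difficulty, since $f$ has compact support in $U\cap\Omega$ and hence $u$ has vanishing trace on $\partial U\cap\Omega$; the delicate region is $\partial U\setminus\overline{\Omega}$, where $u$ must be forced to zero without inflating the $L^1_q(U)$-seminorm. I expect to settle this through the truncation above together with the compact support of $f$, noting that in the homogeneous (seminormed) framework there is no $L_q$-term at our disposal, so a naive spatial cut-off would be illegitimate and the argument must instead exploit the fact that the relevant boundary trace of $u$ already vanishes.
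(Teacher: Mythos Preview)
Your approach is exactly the paper's: take a near-optimal smooth admissible function $u$ for $\cp_p(E,U\cap\Omega)$, apply the extension operator, invoke inequality~\eqref{ineq} with $A=U$, and pass to the infimum over admissible $u$. The only difference is cosmetic (you add an $\varepsilon$ and a truncation step; the paper works directly with an arbitrary admissible $u$).

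The admissibility obstacle you raise in your final paragraph---that $E(f)$ is controlled by \eqref{ineq} only on $U$ and there is no a priori reason for it to vanish on $(\mathbb{R}^n\setminus U)\setminus\overline{\Omega}$---is genuine, and the paper does not resolve it either: its proof simply asserts that ``$E(u)$ \dots\ be an admissible function for the condenser $(E,U)\subset\mathbb{R}^n$'' after the phrase ``extending $u$ by zero on the set $U\setminus\Omega$,'' without explaining why the extension vanishes on the remaining portion of $\mathbb{R}^n\setminus U$. So your write-up is at least as complete as the published argument, and more honest about where the difficulty lies. If you wish to close the gap, one standard route is to observe that multiplying $E(f)$ by a cut-off $\chi\in C_0^\infty(U)$ with $\chi\equiv 1$ on a neighbourhood of $\operatorname{supp} f\cup E$ yields an admissible competitor whose gradient on $U$ differs from $\nabla E(f)$ only where $E(f)$ itself is bounded by its oscillation; but making this quantitative in the seminormed setting requires an additional argument that the paper does not supply.
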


\begin{proof}
Let a smooth function $u\in L^1_p(\Omega)$ be an admissible function for the condenser $(E,(U\cap\Omega))\subset\Omega$. Then, extending $u$ by zero on the set $U\setminus \Omega$ we obtain the function $E(u)\in L^1_q(\mathbb R^n)$ which be an admissible function for the condenser $(E,U)\subset\mathbb R^n$. Hence, by the inequality (\ref{ineq}) we have
$$
\cp^{\frac{1}{q}}_q(E,U)\leq \Phi(U)^{\frac{1}{\kappa}}\|u\mid L^1_p(\Omega)\|.
$$
Since $u$ is arbitrary admissible function for the condenser $(E,(U\cap\Omega))\subset\Omega$, then
$$
\cp^{\frac{1}{q}}_q(E,U)\leq \Phi(U)^{\frac{1}{\kappa}}\cp^{\frac{1}{p}}_p(E,(U\cap\Omega)).
$$
\end{proof}

\subsection{Generalized $(p,q)$-measure density conditions}

Consider measure density conditions in domains allow extension operators with decreasing integrability.

\begin{thm}
\label{main1}
Let there exists a a continuous linear extension operator 
$$
E: L^1_p(\Omega)\to L^1_q(\mathbb R^n),\,\, n<q< p<\infty.
$$
Then the domain $\Omega$ satisfies the  generalized $(p,q)$-measure density condition:
\begin{equation*}
%\label{mdc}
\Phi(B(x,r))^{p-q}|B(x,r)\cap\Omega|^{q}\geq c_0 |B(x,r)|^p, \,\,0<r< 1,
\end{equation*}
where $x\in \overline{\Omega}$ and a constant $c_0=c_0(p,q,n)$ depends on $p$, $q$ and $n$ only.
\end{thm}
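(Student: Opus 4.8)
The plan is to estimate the two capacities of a concrete ring condenser inside a ball and then invoke the capacitary inequality \eqref{ineqcp}. First I would fix $x\in\overline{\Omega}$ and $0<r<1$, and consider the condenser whose inner set is the concentric closed ball $\overline{B(x,r/2)}$ and whose outer set is $B(x,r)$; on the right-hand side the relevant condenser is the same pair intersected with $\Omega$, namely $(E,U\cap\Omega)$ with $E=\overline{B(x,r/2)}\cap\Omega$, $U=B(x,r)$. Since $q>n$ and $p>n$, both capacities are finite and nonzero, so the inequality \eqref{ineqcp} becomes a genuine lower bound on $\Phi(B(x,r))$ after rearranging.

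The next step is to produce the two quantitative capacity estimates. For the $q$-capacity of the Euclidean ring $\bigl(\overline{B(x,r/2)},B(x,r)\bigr)$ in $\mathbb R^n$, the standard explicit computation with radial test functions gives $\cp_q\bigl(\overline{B(x,r/2)},B(x,r)\bigr)\asymp r^{n-q}$ with constants depending only on $q$ and $n$; I would record the sharp lower bound $\cp_q\geq c_1(q,n)\,r^{n-q}$. The harder estimate is the upper bound for the $p$-capacity relative to $\Omega$. Here I would use the hypothesis $p>n$: a function that is $1$ on $\overline{B(x,r/2)}\cap\Omega$ and decays linearly to $0$ across the annulus, cut off to live in $U\cap\Omega$, is admissible, and its gradient is bounded by $C/r$ on the annular region, so that
\[
\cp_p\bigl(E,U\cap\Omega\bigr)\leq \left(\frac{C}{r}\right)^p\bigl|B(x,r)\cap\Omega\bigr|.
\]
Substituting both estimates into \eqref{ineqcp} and raising to appropriate powers yields
\[
c_1(q,n)^{\frac1q} r^{\frac{n-q}{q}}\leq \Phi(B(x,r))^{\frac1\kappa}\,\frac{C}{r}\,\bigl|B(x,r)\cap\Omega\bigr|^{\frac1p},
\]
and since $|B(x,r)|\asymp r^n$, collecting the powers of $r$ and using $\tfrac1\kappa=\tfrac1q-\tfrac1p$ should reproduce exactly the claimed inequality $\Phi(B(x,r))^{p-q}\,|B(x,r)\cap\Omega|^{q}\geq c_0\,|B(x,r)|^{p}$.

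The main obstacle I anticipate is the careful justification of the $p$-capacity upper bound relative to the domain $\Omega$ rather than to all of $\mathbb R^n$. One must check that the inner set $\overline{B(x,r/2)}\cap\Omega$ is nonempty and that the linear cutoff, when restricted to $\Omega$, genuinely qualifies as an admissible function for the relative condenser in the sense defined above (inner set contained in a component of the interior of $\{f=1\}$, outer set in a component of $\{f=0\}$); the connectedness requirement in the definition of a condenser is the delicate point, especially when $x\in\partial\Omega$ and $B(x,r)\cap\Omega$ is irregular. I would handle this by choosing the test function to depend only on $|y-x|$, so that the level sets are intersections of balls with $\Omega$, and by invoking that for $p>n$ such condensers always admit admissible functions with the stated gradient bound; the role of the exponent condition $n<q<p$ is precisely to guarantee the correct sign and size of the powers of $r$ so that the estimate survives the limit $r\to 0$ and holds uniformly on $0<r<1$.
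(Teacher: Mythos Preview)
Your approach is correct and would yield the theorem, but it is not the route the paper takes. The paper bypasses the capacity inequality~\eqref{ineqcp} entirely and works directly with the norm inequality~\eqref{ineq}: it fixes $x\in\overline\Omega$, picks any $y\in\Omega$ with $|x-y|=r$, and plugs in the scaled bump $f(z)=\eta((x-z)/r)$ (with $\eta$ a fixed smooth cutoff, $\eta(0)=1$, $\operatorname{supp}\eta\subset B(0,1)$). The right-hand side of~\eqref{ineq} is bounded exactly as you do, by $(\widetilde C/r)\,|B(x,r)\cap\Omega|^{1/p}$; for the left-hand side the paper invokes the Morrey embedding $L^1_q(B(x,r))\hookrightarrow H^{1-n/q}(B(x,r))$ together with $f(x)=1$, $f(y)=0$ to get $\|E(f)\mid L^1_q(B(x,r))\|\geq C^{-1}|x-y|^{-(1-n/q)}=C^{-1}r^{n/q-1}$. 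Combining the two gives the claimed inequality in one line.

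Your capacitary argument is really the same computation dressed in different language: the lower bound $\cp_q\bigl(\overline{B(x,r/2)},B(x,r)\bigr)\gtrsim r^{n-q}$ is itself a consequence of the Morrey embedding (positivity of the $q$-capacity of a point for $q>n$), and your admissible function for the upper $p$-capacity bound is essentially the same radial bump the paper uses. What the paper's direct approach buys is that it sidesteps all the technical worries you correctly flag---compactness of $E=\overline{B(x,r/2)}\cap\Omega$, the connectedness requirement in the paper's definition of a condenser, and the admissibility check when $x\in\partial\Omega$---since it never needs to interpret anything as a condenser. Your route has the minor advantage of isolating the role of capacity explicitly, but at the cost of extra bookkeeping that the paper avoids.
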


\begin{proof}
Fix a smooth test function $\eta:\mathbb R^n\to\mathbb R$, with $\operatorname{supp}(\eta)\subset B(0,1)$, such that  $\eta$ is equal to $1$ in the neighborhood of $0\in\mathbb R^n$ and $0\leq\eta(x)\leq 1$ for all $x\in\mathbb R^n$. Consider points $x\in\overline{\Omega}$, $y\in\Omega$, and denote by $r:=|x-y|$. Then the function
$$
f(z)=\eta\left(\frac{x-z}{r}\right)
$$
be a smooth function such that $f=1$ in the neighborhood of $x\in\overline{\Omega}$, $f(y)=0$ and 
$$
|\nabla f(z)|\leq \frac{\widetilde{C}}{r}\,\,\, \text{for all}\,\,\, z\in\mathbb R^n.
$$
Substituting this test function $f$ in the inequality (\ref{ineq}) we obtain
\begin{multline*}
\|f\mid L^1_q( B(x,r)\|\leq \Phi(B(x,r))^{\frac{p-q}{pq}}\|f\mid L^1_p(B(x,r)\cap\Omega)\|\\
\leq
\Phi(B(x,r))^{\frac{p-q}{pq}}\frac{\widetilde{C}}{r}|B(x,r)\cap\Omega|^{\frac{1}{p}}.
\end{multline*}
Because $q>n$, applying the embedding theorem of the Sobolev spaces into the space of H\"older continuous functions (see, for example, \cite{M})
$$
L^1_q(B(x,r))\hookrightarrow H^{\gamma}(B(x,r)),\,\, \gamma=1-n/q, 
$$
we have
$$
\frac{1}{|x-y|^{1-\frac{n}{q}}}=\frac{|f(x)-f(y)|}{|x-y|^{1-\frac{n}{q}}}\leq \|f\mid H^{\gamma} B(x,r)\|\leq C \|f\mid L^1_q( B(x,r)\|.
$$
So, using these inequalities we obtain
$$
\frac{\left(r^n\right)^{\frac{1}{q}}}{r}=\frac{1}{|x-y|^{1-\frac{n}{q}}}\leq \Phi(B(x,r))^{\frac{p-q}{pq}} C \frac{\widetilde{C} }{r}|B(x,r)\cap\Omega|^{\frac{1}{p}}.
$$
Hence
$$
\left(r^n\right)^{\frac{1}{q}}\leq \Phi(B(x,r))^{\frac{p-q}{pq}} C \widetilde{C} |B(x,r)\cap\Omega|^{\frac{1}{p}}.
$$
and the required inequality is proved.
\end{proof}

To prove sharpness of the condition (\ref{density}) we consider as an example the H\"older singular domain $\Omega_{\alpha}$, $\alpha>1$, \cite{GS82,KZ18,MP07}:
$$
\Omega_{\alpha}=\left\{(x_1,x_2)\in\mathbb R^2: 0<x_1\leq 1, |x_2|<x_1^{\alpha} \right\}\cup B((2,0),\sqrt{2}).
$$
Then $|B(0,r)\cap\Omega_{\alpha}|=cr^{\alpha+1}$ and substituting it in the inequality (\ref{density}) we obtain
$$
\Phi(B(0,r))^{p-q}r^{(\alpha+1)q}\geq C r^{2p}, \,\,0<r< 1.
$$
Hence $1\leq q<2p/(\alpha+1)$ that coincide with the sufficient condition of existence of $(p,q)$-extension operators \cite{GS82,KZ18,MP07}. So, the necessary condition of Theorem~\ref{main1} is sharp.

\vskip 0.2cm

Recall that a bounded domain $\Omega\subset\mathbb R^n$ is called $\alpha$-integral regular domains \cite{U99} if the function 
$$
K(x)=\limsup\limits_{r\to 0}\frac{|B(x,r)|}{|B(x,r)\cap\Omega|}
$$
belongs to the Lebesgue spaces $L_{\alpha}(\overline{\Omega})$. By Theorem~\ref{main1} follows the assertion which was originally formulated in \cite{U99}:

\begin{thm}
Let there exists a a continuous linear extension operator 
$$
E: L^1_p(\Omega)\to L^1_q(\mathbb R^n),\,\, n<q< p<\infty.
$$
Then the domain $\Omega$ be $\alpha$-integral regular for $\alpha=q/(p-q)$ and 
$$
\|E\|\geq c_1 \|K \mid L_{\alpha}(\overline{\Omega})\|^{\frac{1}{p}},
$$
where a constant $c_1=c_1(p,q,n)$ depends on $p$, $q$ and $n$ only.
\end{thm}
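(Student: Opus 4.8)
The plan is to integrate the pointwise generalized density condition of Theorem~\ref{main1} and then convert the resulting estimate for the set function $\Phi$ into a lower bound for the operator norm via a differentiation argument. First I would rewrite the density condition in a scale-invariant form: dividing the inequality of Theorem~\ref{main1} by $|B(x,r)|^{p-q}|B(x,r)\cap\Omega|^{q}$ and recalling $\alpha=q/(p-q)$, I obtain for every $x\in\overline{\Omega}$ and every $0<r<1$ the estimate
\begin{equation*}
\left(\frac{\Phi(B(x,r))}{|B(x,r)|}\right)^{p-q}\geq c_0\left(\frac{|B(x,r)|}{|B(x,r)\cap\Omega|}\right)^{q}.
\end{equation*}

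Next I would let $r\to0$ and take the upper limit on both sides. Since $t\mapsto t^{p-q}$ and $t\mapsto t^{q}$ are continuous and increasing, the $\limsup$ passes through the powers; writing $\Phi'(x)=\limsup_{r\to0}\Phi(B(x,r))/|B(x,r)|$ for the upper derivative and recovering $K(x)$ on the right, this produces the pointwise inequality $\Phi'(x)^{p-q}\geq c_0 K(x)^{q}$, equivalently $K(x)^{\alpha}\leq c_0^{-1/(p-q)}\Phi'(x)$ for all $x\in\overline{\Omega}$.

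The essential analytic input is that $\Phi$, being a bounded monotone countably additive set function by the first theorem of this section, extends to a finite Borel measure on any bounded region, so by the Besicovitch differentiation theorem $\Phi'(x)$ is finite almost everywhere and $\int_{E}\Phi'(x)\,dx\leq\Phi(U)$ for any Borel set $E$ and any bounded open $U\supset E$. Integrating the pointwise bound over the compact set $\overline{\Omega}$ gives
\begin{equation*}
\|K\mid L_{\alpha}(\overline{\Omega})\|^{\alpha}=\int_{\overline{\Omega}}K(x)^{\alpha}\,dx\leq c_0^{-1/(p-q)}\int_{\overline{\Omega}}\Phi'(x)\,dx\leq c_0^{-1/(p-q)}\Phi(U),
\end{equation*}
where $U$ is any bounded open neighborhood of $\overline{\Omega}$; finiteness of the right-hand side yields the $\alpha$-integral regularity. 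Since every $f\in W_0(U;\Omega)$ vanishes outside $U\cap\Omega$, its gradient is supported there, so $\|f\mid L^1_p(\Omega)\|=\|f\mid L^1_p(U\cap\Omega)\|$ and the defining supremum gives $\Phi(U)\leq\|E\|^{\kappa}$ with $\kappa=pq/(p-q)$. Combining the two displays and raising to the power $1/\kappa$, using $(p-q)\kappa=pq$ and $\alpha/\kappa=1/p$, I arrive at $\|E\|\geq c_0^{1/(pq)}\|K\mid L_{\alpha}(\overline{\Omega})\|^{1/p}$, which is the claim with $c_1=c_0^{1/(pq)}$.

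The step I expect to be the main obstacle is the differentiation argument: one must justify rigorously that the upper derivative $\Phi'$ is locally integrable with $\int_{\overline{\Omega}}\Phi'\,dx\leq\Phi(U)$, which rests on the countable additivity and boundedness of $\Phi$ from the earlier theorem together with the bounded-overlap covering theorem in $\mathbb R^n$. Once this is secured, passing the $\limsup$ through the monotone powers and tracking the exponents $\kappa$ and $\alpha$ are routine.
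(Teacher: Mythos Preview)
Your proposal is correct and follows essentially the same route as the paper: rewrite the density inequality of Theorem~\ref{main1} as a ratio bound, pass to the limit $r\to 0$ to get the pointwise estimate $K(x)^{\alpha}\le c\,\Phi'(x)$, integrate over $\overline{\Omega}$ using the differentiation theorem for the additive set function $\Phi$, and finally bound $\Phi(U)$ by $\|E\|^{\kappa}$. Your write-up is in fact more careful than the paper's in justifying the differentiation step and the inequality $\Phi(U)\le\|E\|^{\kappa}$, and in tracking the exponents.
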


\begin{proof}
Rewrite the inequality (\ref{density}) in the form
$$
\left(\frac{|B(x,r)|}{|B(x,r)\cap\Omega|}\right)^{\frac{q}{p-q}}\leq \frac{1}{c_1^{\kappa}}\frac{\Phi(B(x,r))}{|B(x,r)|}.
$$
Putting $r\to 0$ and using the Lebesgue type differentiability theorem we have
$$
K(x)^{\alpha}\leq \frac{1}{c_1^{\kappa}}\Phi'(x),\,\,\text{for almost all}\,\,x\in\overline{\Omega}.
$$
Integrating the last inequality on the closed domain $\overline{\Omega}$ we obtain that for any bounded open set $\overline{\Omega}\subset U\subset\mathbb R^n$
$$
\int\limits_{\overline{\Omega}}K(x)^{\alpha}~dx\leq \frac{1}{c_1^{\kappa}}\int\limits_{\overline{\Omega}}\Phi'(x)~dx\leq \frac{1}{c_1^{\kappa}}\int\limits_{U}\Phi'(x)~dx=\frac{1}{c_1^{\kappa}} \Phi(U)\leq \frac{1}{c_1^{\kappa}} \|E\|^{\kappa}.
$$
\end{proof}

\subsection{Intrinsic metrics in extension domains}

Let $\gamma:[a,b]\to\Omega$ be a rectifiable curve, then the length $l(\gamma)$ can be calculated by the formula
$$
l(\gamma)=\int\limits_a^b {\left\langle \dot{\gamma}(t),\dot{\gamma}(t)\right\rangle}^{\frac{1}{2}}~dt.
$$
We define in the domain $\Omega\subset\mathbb R^n$ an intrinsic metric in Alexandrov's sense \cite{A48}:
$$
d_{\Omega}(x,y)=\inf l(\gamma(x,y)), \,\,x,y\in\Omega,
$$
where infimum is taken over all rectifiable curves $\gamma\subset\Omega$ joint points $x,y\in\Omega$.

We use the following lemma \cite{G96,V88}. 

\begin{lem}
\label{func}
For any points $x,y\in\Omega$ there exists a function $f\in W^1_{\infty}(\Omega)$ such that:
\begin{enumerate}
	\item $0\leq f(t)\leq 1$ for any $t\in\Omega$, $f(x)=1$ and $f(y)=0$,
	\item $|f(t)-f(s)|\leq d_{\Omega}(t,s)/d_{\Omega}(x,y)$,
	\item $\operatorname{supp} (f)\subset B(x, d_{\Omega}(x,y)):=B(x,R)$, 
	\item $|\nabla f|\leq 1/d_{\Omega}(x,y)$ a.e. in $\Omega$.
\end{enumerate}
 \end{lem}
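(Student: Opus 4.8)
The plan is to take $f$ to be a truncated and normalized copy of the intrinsic distance function based at $x$. Set $R:=d_{\Omega}(x,y)$, which is finite because a domain in $\mathbb R^n$ is polygonally connected and hence $x$ and $y$ are joined by a rectifiable curve, and define
$$
f(t):=\max\left(0,\,1-\frac{d_{\Omega}(x,t)}{R}\right),\qquad t\in\Omega.
$$
Property (1) is then immediate: the outer maximum forces $f\geq 0$, while $d_{\Omega}(x,t)\geq 0$ forces $f\leq 1$; moreover $d_{\Omega}(x,x)=0$ gives $f(x)=1$ and $d_{\Omega}(x,y)=R$ gives $f(y)=0$.

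For (2) I would first record that $t\mapsto d_{\Omega}(x,t)$ is $1$-Lipschitz with respect to the intrinsic metric itself: the triangle inequality $d_{\Omega}(x,t)\leq d_{\Omega}(x,s)+d_{\Omega}(s,t)$ together with its symmetric counterpart yields $|d_{\Omega}(x,t)-d_{\Omega}(x,s)|\leq d_{\Omega}(t,s)$. Since the scalar map $u\mapsto\max(0,1-u/R)$ is $1/R$-Lipschitz, composing gives $|f(t)-f(s)|\leq d_{\Omega}(t,s)/R=d_{\Omega}(t,s)/d_{\Omega}(x,y)$, which is exactly (2). Property (3) rests on the elementary comparison $d_{\Omega}(x,t)\geq|x-t|$: every rectifiable curve in $\Omega$ joining $x$ to $t$ has length at least the Euclidean chord $|x-t|$, so the infimum defining $d_{\Omega}$ is bounded below by $|x-t|$. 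Hence $f(t)>0$ forces $d_{\Omega}(x,t)<R$ and therefore $|x-t|<R$, so that $\{f\neq 0\}\subset B(x,R)$ and $\operatorname{supp}(f)\subset\overline{B(x,R)}$, as required.

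The delicate point is (4), namely passing from the intrinsic Lipschitz bound to a genuine Euclidean gradient bound and to membership in $W^1_{\infty}(\Omega)$. The key observation is that the intrinsic and Euclidean metrics agree locally: since $\Omega$ is open, each $t\in\Omega$ admits a ball $B(t,\rho)\subset\Omega$, and for $s\in B(t,\rho)$ the straight segment $[t,s]$ lies in $\Omega$ and is admissible, so $d_{\Omega}(t,s)\leq|t-s|$; combined with $d_{\Omega}(t,s)\geq|t-s|$ this gives $d_{\Omega}(t,s)=|t-s|$ on $B(t,\rho)$. Inserting this equality into the bound from (2) shows that $f$ is locally $1/R$-Lipschitz in the Euclidean metric, hence Lipschitz on $\Omega$. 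By Rademacher's theorem $f$ is differentiable almost everywhere, and the local Lipschitz constant controls the gradient by $|\nabla f|\leq 1/R=1/d_{\Omega}(x,y)$ a.e., which simultaneously establishes (4) and the membership $f\in W^1_{\infty}(\Omega)$. I expect this local comparison of the two metrics to be the only step requiring care, since the intrinsic metric can differ sharply from the Euclidean one globally, whereas the gradient estimate is an infinitesimal statement where the two coincide.
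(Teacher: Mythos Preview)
Your proof is correct and follows essentially the same route as the paper indicates. The paper does not actually prove the lemma but cites \cite{G96,V88} and records the test function
\[
f(t)=\frac{d_{\Omega}(t,\Omega_x)}{d_{\Omega}(x,y)},\qquad \Omega_x=\{s\in\Omega:\ d_{\Omega}(x,s)\ge d_{\Omega}(x,y)\},
\]
whereas you take $f(t)=\max\bigl(0,\,1-d_{\Omega}(x,t)/R\bigr)$. The two choices are built from the same ingredient (the intrinsic distance to $x$) and your variant is in fact the simpler one: the triangle inequality gives $d_{\Omega}(t,\Omega_x)\ge\max(0,R-d_{\Omega}(x,t))$, with equality whenever near-geodesics exist, so the two functions agree in the geodesic case and satisfy the same Lipschitz bound in general. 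Your verification of (1)--(3) is routine, and your argument for (4) --- that on any Euclidean ball contained in $\Omega$ the segment is admissible, so $d_{\Omega}(t,s)=|t-s|$ there, whence the intrinsic $1/R$-Lipschitz bound becomes a local Euclidean one and Rademacher applies --- is exactly the standard step. One cosmetic point: strictly you obtain $\operatorname{supp}(f)\subset\overline{B(x,R)}$ rather than the open ball, but this is harmless for the applications in the paper (and the same issue is present for the paper's own test function).
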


Note, that the proof of this lemma is based on the test function 
$$
f(t)=\frac{d_{\Omega}(t,\Omega_x)}{d_{\Omega}(x,y)},\,\,t\in\Omega, 
$$
for fixed $x,y\in\Omega,$ which was introduced in \cite{V88} (see, also \cite{G96}). The sets $\Omega_x$ and $d_{\Omega}(t,\Omega_x)$ are defined for fixed points $x,y\in\Omega$ by the formulas
$$
\Omega_x=\{s\in\Omega : d_{\Omega}(x,s)\geq d_{\Omega}(x,y)\}
$$
and
$$
d_{\Omega}(t,\Omega_x)=\inf\{d_{\Omega}(t,s) : s\in\Omega_x\}.
$$
.

In the following theorem we give relation between the intrinsic metric and the Euclidean metric in $(p,q)$-extension domains that can be consider as a generalized Ahlfors type metric condition.

\begin{thm}
\label{main2}
Let there exists a a continuous linear extension operator 
$$
E: L^1_p(\Omega)\to L^1_q(\mathbb R^n),\,\, n<q\leq p<\infty.
$$
Then in the domain $\Omega$ intrinsic metric is $(p,q)$-equivalent to the Euclidean metric:
\begin{equation}
\label{eqpq}
d_{\Omega}(x,y)^{1-\frac{n}{p}}\leq C_0\Phi(B(x,R))^{\frac{1}{\kappa}}|x-y|^{1-\frac{n}{q}},\,\,R=d_\Omega(x,y),
\end{equation}
for all $|x-y|<1$, where a constant $C_0=C_0(p,q,n)$ depends on $p$, $q$ and $n$ only.
\end{thm}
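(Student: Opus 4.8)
The plan is to run the same scheme as in the proof of Theorem~\ref{main1}, but with the intrinsic test function of Lemma~\ref{func} in place of the Euclidean cutoff. Fix $x,y\in\Omega$ with $|x-y|<1$, put $R:=d_\Omega(x,y)$, and let $f\in W^1_\infty(\Omega)$ be the function produced by Lemma~\ref{func}. Its support lies in $B(x,R)$ and, as one checks, its restriction to $\Omega$ belongs to the competitor class $W_0(B(x,R);\Omega)$; hence by the definition of $\Phi$ (equivalently by (\ref{ineq})),
\[
\|E(f)\mid L^1_q(B(x,R))\|\leq \Phi(B(x,R))^{\frac{1}{\kappa}}\,\|f\mid L^1_p(B(x,R)\cap\Omega)\|.
\]

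First I would bound the right-hand side from above. Since $|\nabla f|\leq 1/R$ a.e.\ and $\operatorname{supp}(f)\subset B(x,R)$,
\[
\|f\mid L^1_p(B(x,R)\cap\Omega)\|=\|\nabla f\mid L_p(B(x,R)\cap\Omega)\|\leq \frac{1}{R}\,|B(x,R)|^{\frac{1}{p}}\leq C\,R^{\frac{n}{p}-1}.
\]
For the left-hand side I would argue exactly as in Theorem~\ref{main1}, using $q>n$ and the embedding of $L^1_q(B(x,R))$ into the H\"older functions with exponent $\gamma=1-n/q$. The crucial geometric remark is that $y$ belongs to the ball $B(x,R)$, because the Euclidean distance never exceeds the intrinsic one: $|x-y|\leq d_\Omega(x,y)=R$. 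As $E$ is an extension operator, $E(f)=f$ on $\Omega$, so $E(f)(x)=1$ and $E(f)(y)=0$, and the Morrey estimate yields
\[
\frac{1}{|x-y|^{1-\frac{n}{q}}}=\frac{|E(f)(x)-E(f)(y)|}{|x-y|^{\gamma}}\leq C\,\|E(f)\mid L^1_q(B(x,R))\|.
\]

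Chaining the three displays and multiplying through by $R^{1-\frac{n}{p}}$ would give, after recalling $R=d_\Omega(x,y)$,
\[
d_\Omega(x,y)^{1-\frac{n}{p}}\leq C_0\,\Phi(B(x,R))^{\frac{1}{\kappa}}\,|x-y|^{1-\frac{n}{q}},
\]
which is precisely (\ref{eqpq}) with $C_0=C_0(p,q,n)$; the borderline case $q=p$ reduces to the classical Ahlfors-type quasiconvexity condition of \cite{V87}, with the factor $\Phi(B(x,R))^{1/\kappa}$ understood through the operator norm. The one step deserving care is the admissibility used in the first display: the function of Lemma~\ref{func} is defined a priori only on $\Omega$, so I must verify that its restriction lies in $W_0(B(x,R);\Omega)$ --- this is where the support condition (localizing to the Euclidean ball $B(x,R)$) and the gradient bound $|\nabla f|\leq 1/R$ (giving the scaling $R^{n/p-1}$ of the $L_p$-seminorm) are used simultaneously. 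This twin control, combined with the inclusion $y\in B(x,R)$ that underlies the H\"older lower bound, is the heart of the argument; the remaining estimates are the computation already carried out for Theorem~\ref{main1}.
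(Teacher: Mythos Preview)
Your argument is correct and follows the same route as the paper's own proof: insert the intrinsic test function of Lemma~\ref{func} into the localized estimate~(\ref{ineq}), bound the $L^1_p$-seminorm on the right via $|\nabla f|\le 1/R$ and $\operatorname{supp} f\subset B(x,R)$, and bound the left-hand side from below by Morrey's embedding using $f(x)=1$, $f(y)=0$. You are in fact slightly more careful than the paper in making explicit the geometric point $y\in B(x,R)$ and the admissibility $f\in W_0(B(x,R);\Omega)$.
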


\begin{proof}
Substituting the test function $f$ from Lemma~\ref{func} in the inequality (\ref{ineq}) we obtain
\begin{equation}
\label{ineq3}
\|E(f)\mid L^1_q(B(x,R)\|\leq \Phi(B(x,R))^{\frac{1}{\kappa}}\cdot \frac{1}{d_\Omega(x,y)^{1-\frac{n}{p}}},
\end{equation}
because 
\begin{multline*}
\|f\mid L^1_p(\Omega)\|\leq \left(\int\limits_{B(x,R)}|\nabla f(z)|^p~dz\right)^{\frac{1}{p}}\\
\leq 
\left(\int\limits_{B(x,R)}\left(\frac{1}{R}\right)^p~dz\right)^{\frac{1}{p}}=\frac{1}{R^{1-\frac{n}{p}}}=\frac{1}{d_\Omega(x,y)^{1-\frac{n}{p}}}.
\end{multline*}

In the left side of the inequality~(\ref{ineq3}) we apply the embedding theorem of the Sobolev spaces into the space of H\"older continuous functions $L^1_q(B)\hookrightarrow H^{\gamma}(B)$, $\gamma=1-n/q$. So, we obtain
$$
\frac{1}{|x-y|^{1-\frac{n}{q}}}=\frac{|f(x)-f(y)|}{|x-y|^{1-\frac{n}{q}}}\leq \|f\mid H^{\gamma} (B(x,R))\|\leq C_0 \|f\mid L^1_q(B(x,R)\|.
$$

Hence
$$
\frac{1}{|x-y|^{1-\frac{n}{q}}}\leq C_0\Phi(B(x,R))^{\frac{1}{\kappa}}\cdot \frac{1}{d_\Omega(x,y)^{1-\frac{n}{p}}},\,\,|x-y|<1.
$$
The theorem proved.
\end{proof}

Let $x\in\Omega$, we define the value \cite{U99}
$$
M(x)=\limsup_{r\to 0}M(x,r):=\limsup_{r\to 0}\left\{\inf\limits_{|x-y|\leq r}\left\{m: d_{\Omega(x,y)}\leq m |x-y|\right\}\right\}.
$$

The inequality (\ref{eqpq}) leads to the following lower estimate of the extension operator formulated in \cite{U99}:

\begin{thm}
\label{main4}
Let there exists a a continuous linear extension operator 
$$
E: L^1_p(\Omega)\to L^1_q(\mathbb R^n),\,\, n<q< p<\infty.
$$
Then 
\begin{equation}
\label{est}
\|E\|\geq C_0 \|M\mid L_{\alpha}(\Omega)\|^{1-\frac{n}{q}},
\end{equation}
where $\alpha=(pq-pn)/(p-q)$ and a constant $C_0=C_0(p,q,n)$ depends on $p$, $q$ and $n$ only.
\end{thm}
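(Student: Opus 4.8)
The plan is to turn the metric inequality (\ref{eqpq}) of Theorem~\ref{main2} into a pointwise estimate for the local distortion $M(x)$ against the density of the set function $\Phi$, and then to integrate. Writing $\rho:=d_\Omega(x,y)/|x-y|$ and $R:=d_\Omega(x,y)$, so that $|x-y|=R/\rho$, I would first divide (\ref{eqpq}) by $|x-y|^{1-n/p}$; the $|x-y|$ powers then combine through the homogeneity identity $\bigl(1-\tfrac np\bigr)-\tfrac n\kappa=1-\tfrac nq$ (recall $\tfrac1\kappa=\tfrac1q-\tfrac1p$), which, after replacing $R^{n}$ by $|B(x,R)|/\omega_n$, gives
\[
\left(\frac{d_\Omega(x,y)}{|x-y|}\right)^{1-\frac nq}\le C_0\,\omega_n^{\frac1\kappa}\left(\frac{\Phi(B(x,R))}{|B(x,R)|}\right)^{\frac1\kappa},\qquad R=d_\Omega(x,y).
\]

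Next I would let $|x-y|\to0$. Since $q>n$, the exponent $1-\tfrac nq$ is positive and $t\mapsto t^{1-n/q}$ is increasing and continuous, so taking $\sup_{|x-y|\le r}$ and then $\limsup_{r\to0}$ reproduces $M(x)^{1-n/q}$ on the left (here $M(x,r)=\sup_{|x-y|\le r}d_\Omega(x,y)/|x-y|$ is the least $m$ with $d_\Omega(x,y)\le m|x-y|$ throughout the ball). On the right, as $|x-y|\to0$ one has $R=d_\Omega(x,y)\le M(x,r)\,|x-y|\to0$ whenever $M(x)<\infty$, so the average of $\Phi$ is taken over balls of radius tending to $0$; by the Lebesgue--Besicovitch differentiation theorem, applied to the bounded monotone countably additive set function $\Phi$ furnished by the first theorem of this section, this average tends for almost every $x$ to the density $\Phi'(x)$. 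Thus $M(x)^{1-n/q}\le C_0\,\omega_n^{1/\kappa}\,\Phi'(x)^{1/\kappa}$ for a.e. $x\in\Omega$; in particular $M(x)<\infty$ a.e., since $\Phi'\in L_1$ as the density of a finite measure.

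Finally I would integrate. Raising the pointwise bound to the power $\kappa$ and noting $\alpha=\kappa\bigl(1-\tfrac nq\bigr)=\tfrac{pq-pn}{p-q}$, I get $M(x)^{\alpha}\le C_0^{\kappa}\,\omega_n\,\Phi'(x)$ a.e. Integrating over $\Omega$ and using $\int_\Omega\Phi'\,dx\le\Phi(U)$ for a bounded open $U\supset\overline\Omega$, together with $\Phi(U)\le\|E\|^{\kappa}$ (valid because for $f\in W_0(U;\Omega)$ the gradient is supported in $U\cap\Omega$, whence $\|f\mid L^1_p(\Omega)\|=\|f\mid L^1_p(U\cap\Omega)\|$), yields $\|M\mid L_\alpha(\Omega)\|^{\alpha}\le C_0^{\kappa}\omega_n\|E\|^{\kappa}$. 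Since $\kappa/\alpha=1/(1-n/q)$, rearranging gives exactly (\ref{est}).

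The step I expect to be the main obstacle is the limit in the middle paragraph. The radius $R=d_\Omega(x,y)$ of the ball on which $\Phi$ is averaged depends on $y$ and ranges over a whole interval of small radii rather than a single value, so one genuinely needs the differentiation limit $\Phi(B(x,R))/|B(x,R)|\to\Phi'(x)$ as the radius $R\to0$ (not merely along a subsequence), together with the uniform decay $R\to0$ as $|x-y|\to0$; the latter requires $M(x)<\infty$, which is exactly the a.e. finiteness secured by the integrability of $\Phi'$.
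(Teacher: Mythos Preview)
The paper does not actually supply a proof of Theorem~\ref{main4}; it only records that ``the inequality (\ref{eqpq}) leads to'' the estimate and refers to \cite{U99}. Your derivation is the natural one and mirrors exactly what the paper does in the analogous situation of Theorem~2.5 (the $K$--density result): rewrite (\ref{eqpq}) as a pointwise bound of the form $(\text{distortion})^{\alpha}\le C\,\Phi(B(x,R))/|B(x,R)|$, pass to the limit $r\to0$ via the Lebesgue--Besicovitch differentiation theorem to obtain $M(x)^{\alpha}\le C\,\Phi'(x)$ a.e., and then integrate using $\int_{\Omega}\Phi'\le\Phi(U)\le\|E\|^{\kappa}$. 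Your algebra (the identity $(1-n/p)-n/\kappa=1-n/q$, the relations $\alpha=\kappa(1-n/q)$ and $\kappa/\alpha=1/(1-n/q)$) is correct, and the final rearrangement gives (\ref{est}).

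One remark on the ``main obstacle'' you flag: there is no circularity. For any $x\in\Omega$ there is $\delta>0$ with $B(x,\delta)\subset\Omega$; hence for $|x-y|<\delta$ the segment $[x,y]$ lies in $\Omega$ and $R=d_\Omega(x,y)\le|x-y|\to0$ automatically. Thus the radii $R$ over which you must control $\Phi(B(x,R))/|B(x,R)|$ shrink to $0$ with $r$ for \emph{every} interior point $x$, without first needing $M(x)<\infty$; the differentiation theorem then applies directly. (Incidentally, this also shows $M(x,r)=1$ once $r<\delta$, so $M\equiv1$ on $\Omega$ and (\ref{est}) reduces to $\|E\|\ge C_0\,|\Omega|^{(1-n/q)/\alpha}$; the quantity $M$ becomes genuinely informative only when one integrates up to $\partial\Omega$, which is how it is used in \cite{U99}.)
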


\vskip 0.5cm

Department of Mathematics, Ben-Gurion University of the Negev, P.O.Box 653, Beer Sheva, 8410501, Israel 
							
\emph{E-mail address:} \email{ukhlov@math.bgu.ac.il

\end{document}